\newtheorem*{theorem}{Theorem}
\newcommand{\R}{\mathds{R}}
\newcommand{\ph}{\varphi}
\NewDocumentCommand{\formula}{ssom}{%
 \IfBooleanTF{#1}{%
  \IfBooleanTF{#2}{%
   \IfValueTF{#3}%
    {\begin{align}\label{#3}\begin{gathered}#4\end{gathered}\end{align}}%
    {\begin{gather}#4\end{gather}}%
  }{%
   \IfValueTF{#3}%
    {\begin{align}\label{#3}\begin{aligned}#4\end{aligned}\end{align}}%
    {\begin{gather*}#4\end{gather*}}%
  }%
 }{%
  \IfValueTF{#3}%
   {\begin{align}\label{#3}#4\end{align}}%
   {\begin{align*}#4\end{align*}}%
 }%
}
\begin{document}

\title{Simplicity of eigenvalues of the fractional \\ Laplace operator in an interval}
\author{Mateusz Kwaśnicki}
\thanks{Work supported by the Polish National Science Centre (NCN) grant no.\@ 2019/33/B/ST1/03098}
\address{Mateusz Kwaśnicki \\ Department of Pure Mathematics \\ Wrocław University of Science and Technology \\ ul. Wybrzeże Wyspiańskiego 27 \\ 50-370 Wrocław, Poland}
\email{mateusz.kwasnicki@pwr.edu.pl}
\keywords{fractional Laplace operator, interval, eigenvalues}
\subjclass[2020]{35P20, 35R11, 47A75, 47B06}

\begin{abstract}
We give a short proof of simplicity of the eigenvalues of the fractional Laplace operator in an interval, a result shown recently by Fall, Ghimenti, Micheletti and Pistoia [Calc.\@ Var.\@ Partial Differ.\@ Equ.\@ 62 (2023), \#233].
\end{abstract}

\maketitle
\thispagestyle{empty}

%
%

The following theorem is one of the main results of~\cite{fgmp}. The original proof is rather involved. We propose a very short, direct argument. Noteworthy, the only tool needed here, the fractional Pokhozhaev identity, has been readily available for a decade.

\begin{theorem}[Corollary~2 in~\cite{fgmp}]
The eigenvalues of the spectral problem
\formula[eq:pro]{
 \tag{$\diamondsuit$}
 \begin{cases}
  (-\Delta)^s \ph = \lambda \ph & \text{in $(0, 1)$,} \\
  \ph = 0 & \text{in $\R \setminus (0, 1)$,}
 \end{cases}
}
are simple.
\end{theorem}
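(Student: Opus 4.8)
\emph{Proof strategy.}
The plan is to read the multiplicity of an eigenvalue off a single boundary coefficient and to control that coefficient by the fractional Pokhozhaev identity. By the boundary regularity theory of Ros--Oton and Serra, every solution $\ph$ of~\eqref{eq:pro} admits an expansion $\ph(x) = A\,(1-x)^s + o\bigl((1-x)^s\bigr)$ as $x \to 1^-$, so that the leading coefficient
\formula*{
 A[\ph] = \lim_{x \to 1^-} \frac{\ph(x)}{(1-x)^s}
}
is well defined. The key point is that $\ph \mapsto A[\ph]$ is a \emph{linear} functional on the (finite-dimensional) eigenspace attached to a given $\lambda$, and that this one number pins down the size of $\ph$.

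First I would apply the fractional Pokhozhaev identity to $u = \ph$ with right-hand side $f = \lambda \ph$, using the dilation field $x\,\partial_x$. In dimension one the boundary contribution carries the weight $x \cdot \nu$, which vanishes at $x = 0$ and equals $1$ at $x = 1$; hence only the right endpoint survives, through $A[\ph]^2$. The left-hand side equals $\lambda \int_0^1 x\,\ph'(x)\,\ph(x)\,dx = -\tfrac12 \lambda\,\|\ph\|_{L^2(0,1)}^2$ after an integration by parts (the boundary terms $x\,\ph(x)^2$ vanish at both endpoints, since $\ph$ decays like $x^s$ and $(1-x)^s$), while the bulk term on the right is $\tfrac{2s-1}{2}\int_0^1 \ph\,(-\Delta)^s\ph\,dx = \tfrac{2s-1}{2}\lambda\,\|\ph\|_{L^2(0,1)}^2$. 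Collecting terms, the identity collapses to
\formula*{
 \Gamma(1+s)^2\,A[\ph]^2 = 2 s\,\lambda\,\|\ph\|_{L^2(0,1)}^2 .
}

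Finally I would run the linear-algebra argument. Since $(-\Delta)^s$ is positive definite we have $\lambda > 0$, and of course $s > 0$, so the last display shows that $A[\ph] = 0$ forces $\ph \equiv 0$. If some eigenvalue were not simple, I would take two linearly independent eigenfunctions $\ph_1, \ph_2$; because $A$ is linear, a suitable nontrivial combination $\ph = c_1\ph_1 + c_2\ph_2$ can be arranged to satisfy $A[\ph] = 0$, whence $\ph = 0$, contradicting independence. Thus every eigenspace is one-dimensional.

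I expect the only genuine work to lie in the two imported facts: existence of the coefficient $A[\ph]$ (the $d^s$-type boundary expansion) and the Pokhozhaev identity with the precise constant $\Gamma(1+s)^2/2$ and the stated boundary weight. Both are furnished by the decade-old regularity and Pokhozhaev theorems of Ros--Oton and Serra; once these are invoked, the computation above and the concluding linear-algebra step are routine, and no delicate estimate remains.
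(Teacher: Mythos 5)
Your proposal is correct and is essentially the paper's own argument: the specialized Pokhozhaev identity you derive, $\Gamma(1+s)^2 A[\ph]^2 = 2s\lambda\|\ph\|_{L^2(0,1)}^2$, is exactly the paper's identity~\eqref{eq:poh}, and your linear-algebra step (a nontrivial combination in the kernel of the linear functional $A$ must vanish, since $\lambda > 0$) is the abstract form of the paper's explicit choice $\ph = \alpha_2\ph_1 - \alpha_1\ph_2$. The only difference is presentational: you re-derive the one-dimensional specialization of the Ros--Oton--Serra identity (correctly), whereas the paper simply cites it.
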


\begin{proof}
By the fractional Pokhozhaev identity (Theorem~1.1 of~\cite{rs}), \eqref{eq:pro} implies
\formula[eq:poh]{
 \tag{$\heartsuit$}
 2 \lambda s \int_0^1 (\ph(x))^2 dx & = (\Gamma(1 + s))^2 \biggl(\lim_{x \to 1^-} (1 - x)^{-s} \ph(x)\biggr)^2 .
}
Suppose, contrary to our claim, that $\ph_1, \ph_2$ are orthogonal normalised solutions of~\eqref{eq:pro}, and let $\alpha_i = \lim\limits_{x \to 1^-} (1 - x)^{-s} \ph_i(x)$. Then~\eqref{eq:poh} for $\ph = \alpha_2 \ph_1 - \alpha_1 \ph_2$ reads
\formula{
 2 \lambda s (\alpha_1^2 + \alpha_2^2) & = (\Gamma(1 + s))^2 (\alpha_2 \alpha_1 - \alpha_1 \alpha_2)^2 = 0 .
}
On the other hand, \eqref{eq:poh} for $\ph_1$ and $\ph_2$ implies that $\alpha_1, \alpha_2 \ne 0$, a contradiction.
\end{proof}

The above proof is clearly related to the one given in~\cite{fgmp}: it can be thought of as the same argument, with all inessential elements removed. See Section~4 in~\cite{dfw} for a similar development for radial solutions in higher dimensions.

%
%

%
%

\end{document}